\newtheorem{Theorem}{Theorem}
\newtheorem{Lemma}[Theorem]{Lemma}
\newtheorem{Proposition}[Theorem]{Proposition}
\newtheorem{Corollary}[Theorem]{Corollary}
\theoremstyle{definition}
\newtheorem{Example}[Theorem]{Example}
\newtheorem{Remark}[Theorem]{Remark}
\title[Minimal systems of generators for the ideals of monomial curves]{Minimal binomial systems of generators for the ideals of certain monomial curves}
\author{Manuel B. Branco}
\address{Departamento de Matem\'aticas, Universidade de \'Evora, 7000-671 \'Evora, Portugal}
\email{mbb@uevora.pt}
\author{Isabel Colaço}
\address{Departamento de Matem\'atica e Ci\^encias F\'{\i}sicas, Instituto Polit\'ecnico de Beja, 7800-295 Beja, Portugal}
\email{isabel.colaco@ipbeja.pt}
\author{Ignacio Ojeda}
\address{Departamento de Matem\'aticas, Universidad de Extremadura, 06071 Badajoz, Spain}
\email{ojedamc@unex.es}
\thanks{This reserach was partially supported by the Ministerio de Economía y Competitividad
(Spain) /FEDER-UE under grants PGC2018-096446-B-C21 and MTM2017-84890-P, by the Junta de Extremadura(Spain)/FEDER funds, research group FQM-024,}
\subjclass[2010]{Primary: 13P10, 20M14, secondary: 52B20}
\keywords{Binomial ideal, semigroup ideal, minimal system of generators, determinantal ideal, Gr\"obner basis, indispensability.}
\begin{document}

\begin{abstract}
Let $a, b$ and $n > 1$ be three positive integers such that $a$ and $\sum_{j=0}^{n-1} b^j$ are relatively prime. In this paper, we prove that the toric ideal $I$ associated to the submonoid of $\mathbb{N}$ generated by $\{\sum_{j=0}^{n-1} b^j\} \cup \{\sum_{j=0}^{n-1} b^j + a\, \sum_{j=0}^{i-2} b^j \mid i = 2, \ldots, n\}$ is determinantal. Moreover, we prove that for $n > 3$, the ideal $I$ has a unique minimal system of generators if and only if  $a < b-1$.
\end{abstract}

\maketitle

\section{Introduction}

Let $\Bbbk$ be a field and let $\mathcal{A} = \{a_1, \ldots, a_n\}$ be a set of positive integers. It is well-known that the kernel of the $\Bbbk-$algebra homomorphism \begin{equation}\label{ecu2} \varphi_{\mathcal{A}} : \Bbbk[x_1, \ldots, x_n] \to \Bbbk[t^{a_1}, \ldots, t^{a_n}];\ x_i \mapsto t^{a_i},\ i = 1, \ldots, n,\end{equation} where $x_1, \ldots, x_n$ and $t$ are indeterminates, is a binomial ideal (see	 \cite{Herzog70} or \cite{MO18} for a more recent reference). Clearly, $\ker(\varphi_\mathcal{A})$ is the defining ideal of a monomial curve.

Let $b$ be a positive integer and set $r_b(\ell)$ for the $\ell-$th repunit number in base $b$, that is, \[r_b(\ell) = \sum_{j=0}^{\ell-1} b^j.\] By convention, $r_b(0) = 0$. 

The main result in this paper is the explicit determination of a minimal system of binomial generators of $I := \ker \varphi_{\mathcal{A}}$ for \[\mathcal{A} = \{a_i := r_b(n) + a\, r_b(i-1)\ \mid\ i = 1, \ldots, n\},\] where $a$ and $n > 1$ are positive integers. We prove that $I$ is minimally generated by the $2 \times 2$ minors of the matrix \begin{equation}\label{ecu1}
X:= \left(
\begin{array}{cccc} 
x_1^b  & \ldots & x_{n-1}^b & x_n^b \smallskip \\ x_2 & \ldots & x_n & x_1^{a+1}
\end{array}
\right), 
\end{equation}
provided that $\gcd(a_1, \ldots, a_n) = \gcd(a, r_b(n))$ is equal to $1$. In this case, as an immediate consequence, we have that the so-called binomial arithmetical rank of $I$ (see, e.g. \cite{Ka17}) is equal to $\binom{n}{2}$. 

Furthermore, we obtain that $2 \times 2-$minors of $X$ forms a minimal Gr\"obner basis with respect to a family of $\mathcal{A}-$graded reverse lexicographical term orders on $\Bbbk[x_1, \ldots, x_n]$ (Theorem \ref{Prop GB2}) and, applying \cite[Corollary 14]{OV2}, we conclude that for $n > 3$, the ideal $I$ has a unique minimal system of generators if and only if and $a < b-1$ (Corollary \ref{Cor GB2}).

The submonoids of $\mathbb{N}$ generated by $\mathcal{A}$ are studied in detail in \cite{BCO1} as a generalization of the numerical semigroups introduced by D. Torr\~ao et al. in \cite{RBT16} and \cite{RBT17}; in this context, Corollary \ref{Cor GB2} provides a minimal presentation of the submonoid of $\mathbb{N}$ generated by $a_1, \ldots, a_n$, providing an original result not considered in Torr\~ao's PhD thesis.

To achieve our main result (Theorem \ref{Prop GB2}), we first compute the ideal $J$ of the projective monomial curve defined by the kernel of the $\Bbbk-$algebra homomorphism \begin{equation}\label{ecu3}\Bbbk[x_1, \ldots, x_n] \to \Bbbk[t^{r_b(0)}\, s, \ldots, t^{r_b(n-1)}\, s];\quad x_i \mapsto t^{r_b(i-1)}\, s,\ i = 1, \ldots, n,\end{equation} where $s$ is also an indeterminate. This intermediate result (Proposition \ref{Prop GB1}) has its own interest, since it exhibits another family of semigroup ideals that are determinantal and have unique minimal system of binomial generators (Corollary \ref{Cor GB1}).

Throughout the paper, we keep the notation established in this introduction. Moreover, since the case $ n = 2 $ is trivial and the case $ n = 3 $ is well known for any $a_1, a_2$ and $a_3$ (see \cite{Herzog70}), we suppose that $n > 3$ whenever necessary.

\section{Gr\"obner bases and minimal generators for $J$}\label{S2}

In the following, we write $a_{n+k} := r_b(n) + a\, r_b(n+k-1)$ for every $k \geq 1$. Observe that $a_{n+1} = (1+a)\, r_b(n)$.

\begin{Lemma}\label{lema2}
For each pair of positive integers $j$ and $k$, it holds that \[b\, a_j + a_{j+k} = b\, a_{j+k-1} + a_{j+1}.\]
\end{Lemma}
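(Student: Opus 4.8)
The plan is to reduce the claimed identity to a single elementary relation among repunits and then verify that relation directly from the defining recurrence for $r_b(\ell)$.

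First I would observe that the extended definition $a_{n+k} = r_b(n) + a\, r_b(n+k-1)$ ($k \geq 1$) together with the original definition $a_i = r_b(n) + a\, r_b(i-1)$ ($i = 1, \ldots, n$) can be written uniformly as $a_m = r_b(n) + a\, r_b(m-1)$ for every positive integer $m$. Substituting this expression into both sides of the asserted equality, the two contributions of $r_b(n)$ on each side combine to $(b+1)\, r_b(n)$ and cancel. Since $a$ is a positive integer, dividing out the common factor $a$ leaves the purely repunit identity
\[
b\, r_b(j-1) + r_b(j+k-1) = b\, r_b(j+k-2) + r_b(j).
\]

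Next I would prove this identity. The cleanest route is the basic recurrence $r_b(\ell+1) = b\, r_b(\ell) + 1$, valid for all $\ell \geq 0$ (with $r_b(0) = 0$). Applying it with $\ell = j-1$ gives $b\, r_b(j-1) = r_b(j) - 1$, and with $\ell = j+k-2$ gives $b\, r_b(j+k-2) = r_b(j+k-1) - 1$. Substituting these into the left- and right-hand sides turns both into $r_b(j) + r_b(j+k-1) - 1$, establishing the equality. Alternatively, one may clear denominators using the closed form $r_b(\ell) = (b^\ell - 1)/(b-1)$ and compare the two resulting polynomials in $b$, each of which simplifies to $b^j + b^{j+k-1} - b - 1$.

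There is no substantive obstacle here; the statement is a straightforward algebraic manipulation once the uniform formula for $a_m$ is in hand. The only point requiring attention is the bookkeeping of indices: since $j, k \geq 1$, every subscript appearing is a positive integer (so the uniform formula applies) and every repunit argument is nonnegative (the smallest being $j-1 \geq 0$ and $j+k-2 \geq 0$), so the convention $r_b(0) = 0$ covers all boundary cases and no separate subcase analysis is needed.
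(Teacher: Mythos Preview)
Your proof is correct and complete. The approach differs slightly in organization from the paper's: the paper first establishes the decomposition $a_{j+k} = a_j + a\, b^{j-1} r_b(k)$ (via $r_b(j+k-1) = r_b(j-1) + b^{j-1} r_b(k)$) and then manipulates one side into the other using the recurrence $r_b(k) = b\, r_b(k-1) + 1$ once. You instead substitute the uniform formula into both sides symmetrically, cancel the common $(b+1)\, r_b(n)$ and the factor $a$, and then apply the recurrence $r_b(\ell+1) = b\, r_b(\ell) + 1$ once to each side. Both arguments rest on the same elementary recurrence; your reduction is a bit more transparent and symmetric, while the paper's route isolates the intermediate relation $a_{j+k} = a_j + a\, b^{j-1} r_b(k)$, which is not needed elsewhere. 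Either way the lemma is a one-line algebraic identity.
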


\begin{proof}
Since $a_{j+k} = a_1 + a\, r_b(j+k-1) = a_1 + a\, (r_b(j-1) +  b^{j-1}\, r_b(k)) = a_j + a\, b^{j-1}\, r_b(k)$, we conclude that 
\begin{align*}
b\, a_j + a_{j+k} & = b\, a_j + a_j + a\, b^{j-1}\, r_b(k)= \\ & = b\, a_j + a_j + a\, b^ {j-1}\, (b\, r_b(k-1)+1) = \\ & = b\, (a_j + a\, b^{j-1}\, r_b(k-1)) + a_j + a\, b^{j-1} = \\ & =  b\, a_{j+k-1} + (a_1 + a\, r_b(j-1)) + a\, b^{j-1} = \\ & =  b\, a_{j+k-1} + a_{j+1},
\end{align*}
as claimed.
\end{proof}

Let $\prec_i$ be the term order on $\Bbbk[x_1, \ldots, x_n]$ defined by the following matrix
\[
M:=\left(
\begin{array}{ccc|c|ccc}
a_1 & \ldots & a_i & a_{i+1} & a_{i+2} & \ldots & a_n\\ \cline{1-7}
0 &  & -1 & 0 & 0 & \ldots & 0 \\ 
& \iddots &  & \vdots & \vdots & & \vdots \\ 
-1 &  & 0 & 0 & 0 & \ldots & 0 \\ \cline{1-7}
0 & \ldots &  0 & 0 & 0 &  & -1 \\ 
\vdots & & \vdots & \vdots & & \iddots & \\ 
0 & & 0 & 0 & -1 & & 0
\end{array}
\right).
\]
We observe that $\prec_i$ is the $\mathcal{A}-$graded reverse lexicographical term order on $\Bbbk[x_1, \ldots, x_n]$ induced by $x_i \prec_i x_{i-1} \prec_i \ldots \prec_i x_1 \prec_i x_n \prec_i \ldots \prec_i x_{i+1};$ in particular, $x_i$ is the smallest variable for $\prec_i$.

\begin{Lemma}\label{lema3}
If $j \in \{1, \ldots, n-2\}$ and $k \in \{j+1, \ldots, n-1\}$, then \[x_j^b x_{k+1} \prec_i x_{j+1} x_k^b\] if and only if  $i \leq j$ or $k+1 \leq i$.
\end{Lemma}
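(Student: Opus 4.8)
The plan is to reduce the comparison to the reverse-lexicographic tie-break built into the lower rows of $M$ and then run a short case analysis on the position of $i$.

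First I would check that the two monomials have the \emph{same} $\mathcal{A}$-degree, so that the first row of $M$ produces a tie. Indeed, Lemma~\ref{lema2} gives $b\,a_j + a_{k+1} = b\,a_k + a_{j+1}$ (apply it with its two indices equal to $j$ and $k+1-j$, the latter being $\geq 1$ since $k \geq j+1$). Writing $\alpha$ for the exponent vector of $x_j^b x_{k+1}$ and $\beta$ for that of $x_{j+1} x_k^b$, this equality says precisely that the first coordinates of $M\alpha$ and $M\beta$ agree. Hence the order between the two monomials is decided entirely by the remaining rows of $M$.

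Next I would record how those rows act. Reading down the matrix, they return the negated exponents of the variables in the order $x_i, x_{i-1}, \ldots, x_1$ (the columns $1, \ldots, i$) followed by $x_n, x_{n-1}, \ldots, x_{i+2}$ (the columns $i+2, \ldots, n$), while column $i+1$ never reappears. Because of the $-1$ entries, for two monomials of equal $\mathcal{A}$-degree one has $x^\alpha \prec_i x^\beta$ iff at the $\prec_i$-smallest variable $x_v$ with $\alpha_v \neq \beta_v$ one has $\alpha_v > \beta_v$; that is, a larger exponent in the smallest variable makes the monomial smaller. Setting $\gamma = \alpha - \beta$, its support lies in $\{j, j+1, k, k+1\}$, with $\gamma_j = b > 0$ and $\gamma_{k+1} = 1 > 0$, whereas $\gamma_{j+1}$ and $\gamma_k$ are negative (these two positions merge, with value $-(b+1)$, in the boundary case $k = j+1$). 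Therefore $x_j^b x_{k+1} \prec_i x_{j+1} x_k^b$ holds iff the $\prec_i$-smallest variable in the support of $\gamma$ carries a positive coefficient, i.e. iff it is $x_j$ or $x_{k+1}$.

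It then remains to identify that smallest variable as a function of $i$, using that every $x_1, \ldots, x_i$ precedes every $x_{i+2}, \ldots, x_n$ in $\prec_i$, that within each block the larger index is the smaller variable, and that $x_{i+1}$ is largest. If $i \geq k+1$, all support indices lie in $\{1, \ldots, i\}$, so the smallest support variable is $x_{k+1}$ (largest index), whose coefficient is positive, and $\prec_i$ holds. If $i \leq j$, then either $i = j$, where $x_j$ is the unique support variable in $\{1, \ldots, i\}$ and has positive coefficient, or $i < j$, where no support index lies in $\{1, \ldots, i\}$ and the smallest support variable is $x_{k+1}$ (the largest index, which lies in $\{i+2, \ldots, n\}$ since $k+1 \geq j+1 \geq i+2$), again positive; so $\prec_i$ holds. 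Finally, if $j+1 \leq i \leq k$, then $x_j$ and $x_{j+1}$ lie in $\{1, \ldots, i\}$ but $x_{k+1}$ does not, so the smallest support variable is $x_{j+1}$ when $i < k$ and $x_k$ when $i = k$; in both cases its coefficient is negative, so $\prec_i$ fails. Combining the three ranges gives the stated equivalence. The only delicate points, and the ones I would treat most carefully, are fixing the sign convention of the reverse-lexicographic tie-break from the $-1$ entries of $M$, and checking the boundary values $i = j$, $i = k$ together with the degenerate case $k = j+1$ where two support positions coincide; once the intra-block monotonicity of $\prec_i$ is set up, each case is immediate.
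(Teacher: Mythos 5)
Your proof is correct and follows essentially the same route as the paper's: Lemma~\ref{lema2} forces a tie in the $\mathcal{A}$-graded first row of $M$, and the comparison is then settled by locating the $\prec_i$-smallest variable in the support of the exponent difference and checking the sign of its coefficient, with a case split on the position of $i$ relative to $j$ and $k$. Your treatment is in fact slightly more careful at the boundaries: the paper nominally names $x_{k+1}$ as the cheapest variable even when $i=j$, where it is really $x_j$ (carrying the same positive sign, so the conclusion is unaffected), and you also handle the degenerate case $k=j+1$ explicitly.
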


\begin{proof}
By Lemma \ref{lema2}, $b\, a_j + a_{k+1} = a_{j+1} + b\, a_k$, so we just need to decide what the variable $x_j, x_{j+1}, x_k$ or $x_{k+1}$ is cheapest for the order defined by the last $n-1$ rows of $M$. Since $j < j+1 \leq k < k+1$, according to the definition of $\prec_i$, the variable $x_{k+1}$ is cheaper than the other three when $j \leq i$ or $k+1 \leq i$; thus, $x_j^b x_{k+1} \prec_i x_{j+1} x_k^b$ in these cases. Conversely, if $j+1 \leq i \leq k$, then either $x_k$ or $x_{j+1}$ is cheaper than the others if $k=i$ or $k \neq i$, respectively. Therefore $x_j^b x_{k+1} \succ_i x_{j+1} x_k^b$ when $j+1 \leq i \leq k$, and we are done.
\end{proof}

Let $I_2(Y)$ be the ideal of $\Bbbk[x_1, \ldots, x_n]$ generated by the $2 \times 2-$minors of \[
Y:= \left(
\begin{array}{ccccc} 
x_1^b & x_2^b & \ldots & x_{n-1}^b \smallskip \\ x_2 & x_3 & \ldots & x_n
\end{array}
\right).
\]
Let $\mathcal{G}^{(i)}_1, \mathcal{G}^{(i)}_2$ and $\mathcal{G}^{(i)}_3$ be defined as follows
\[
\mathcal{G}^{(i)}_1 = \left\{ \underline{x_{j+1} x_k^b} - x_j^b x_{k+1} \ \mid\ j \in \{i, \ldots, n-2\},\ k \in \{j+1,\ldots, n-1\}\right\},
\]
\[
\mathcal{G}^{(i)}_2 = \left\{ \underline{x_{j+1} x_k^b} - x_j^b x_{k+1} \ \mid\ j \in \{1, \ldots, i-2\},\ k \in \{j+1,\ldots, i-1\}\right\},
\]
\[
\mathcal{G}^{(i)}_3 = \left\{ \underline{x_j^b x_{k+1}} - x_{j+1} x_k^b \ \mid\ j \in \{1, \ldots, i-1\},\ k \in \{i,\ldots, n-1\}\right\}
\]
and let $\mathcal{G}_Y^{(i)}$ be equal to $\mathcal{G}^{(i)}_1 \cup \mathcal{G}^{(i)}_2 \cup \mathcal{G}^{(i)}_3$.

Notice that, by Lemma \ref{lema3}, the underlined monomials are the leading terms with respect to $\prec_i$ of the corresponding binomials.

\begin{Proposition}\label{Prop GB1}
With the above notation, the set $\mathcal{G}_Y^{(i)}$ is the reduced Gr\"obner basis of $I_2(Y)$ with respect to $\prec_i$. In particular, the cardinality of $\mathcal{G}_Y^{(i)}$ is $\binom{n-1}2$.
\end{Proposition}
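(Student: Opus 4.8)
The plan is to verify that $\mathcal{G}_Y^{(i)}$ is a Gröbner basis via Buchberger's criterion, and then to settle reducedness and the count directly. Throughout I write $f_{j,k} := x_j^b x_{k+1} - x_{j+1} x_k^b$ for the $2\times 2$ minor of $Y$ on columns $j<k$, so that $I_2(Y) = \langle f_{j,k} : 1\le j<k\le n-1\rangle$ and each element of $\mathcal{G}_Y^{(i)}$ is $\pm f_{j,k}$ for exactly one pair $(j,k)$; by Lemma \ref{lema3} the underlined monomial is the leading monomial of $f_{j,k}$ for $\prec_i$, namely $x_{j+1}x_k^b$ when $i\le j$ or $k<i$, and $x_j^b x_{k+1}$ when $j<i\le k$. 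The index ranges defining $\mathcal{G}^{(i)}_1,\mathcal{G}^{(i)}_2,\mathcal{G}^{(i)}_3$ are $\{i\le j<k\le n-1\}$, $\{1\le j<k\le i-1\}$ and $\{1\le j\le i-1,\ i\le k\le n-1\}$, and these partition $\{(j,k): 1\le j<k\le n-1\}$ according to whether $j,k$ both lie in $\{i,\ldots,n-1\}$, both in $\{1,\ldots,i-1\}$, or straddle $i$. Hence $\mathcal{G}_Y^{(i)}$ contains one signed copy of every minor, so it generates $I_2(Y)$ and $|\mathcal{G}_Y^{(i)}| = \binom{n-i}2 + \binom{i-1}2 + (i-1)(n-i) = \binom{n-1}2$.

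For the Gröbner property I would run Buchberger's algorithm, using the product criterion to discard immediately every pair of minors whose leading monomials are coprime. The engine for the surviving pairs is the syzygy
\[
x_q^b\,f_{p,r} \;=\; x_p^b\,f_{q,r} + x_r^b\,f_{p,q}\qquad (1\le p<q<r\le n-1),
\]
verified by expanding (the two mixed terms cancel). When two minors share a column, or when the top entry $x_m^b$ of one minor meets the bottom entry $x_m$ of another --- the coincidence forced by the specialization $Y$, which makes leading monomials overlap even for minors with disjoint column sets --- the S-polynomial collapses, after cancelling the common part and (in mixed-orientation cases) one further top-reduction by an intermediate minor, to a monomial multiple $\pm x^{\gamma} f_{p',q'}$ of a single element of $\mathcal{G}_Y^{(i)}$; such a multiple top-reduces to $0$ in one step, so the S-pair reduces to zero.

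The main obstacle is precisely the bookkeeping forced by the interior pivot $i$. By Lemma \ref{lema3} the leading monomial of each minor flips according to the position of $i$ relative to its two columns, so every overlapping S-pair must be inspected in the several orientations that can occur, and the ``specialization'' pairs (disjoint columns but overlapping leading terms) must be matched with the correct intermediate minor that drives their reduction. I expect this case analysis, rather than any individual computation, to be the substantive part of the argument; the identity above controls all of them, but the cofactors and the intermediate reducer depend on the orientation.

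Finally, reducedness is immediate from a degree count. Every leading and every trailing monomial of a minor has total degree $b+1$, and the $2\binom{n-1}2$ monomials $x_j^b x_{k+1}$ and $x_{j+1}x_k^b$ arising from the minors are pairwise distinct (comparing the positions carrying exponent $b$ and exponent $1$ forces equality of the column pairs, with the degenerate pure-power case $j+1=k$ handled separately). Consequently no leading monomial divides another --- equal degree together with divisibility forces equality --- and no trailing monomial is divisible by any leading monomial, so $\mathcal{G}_Y^{(i)}$ is the \emph{reduced} Gröbner basis.
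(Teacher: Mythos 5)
Your overall strategy coincides with the paper's: Buchberger's criterion for the Gr\"obner property, and the observation that $\mathcal{G}_Y^{(i)}$ consists of one signed copy of each of the $\binom{n-1}{2}$ minors of $Y$ (so that generation follows from the count). Your reducedness argument --- all monomials have degree $b+1$ and the $2\binom{n-1}{2}$ monomials occurring are pairwise distinct, so divisibility would force equality --- is a clean justification of a point the paper merely asserts (it needs $b\ge 2$ for the distinctness claim, which is implicit in the repunit setting). The partition of the index pairs $\{(j,k):1\le j<k\le n-1\}$ into the three blocks is also correct.

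The gap is that the core of the proof --- the verification that every S-pair reduces to zero --- is announced but not carried out: you say you ``would run Buchberger's algorithm'' and that you ``expect this case analysis to be the substantive part of the argument.'' That case analysis \emph{is} the proof. The syzygy $x_q^b f_{p,r} = x_p^b f_{q,r} + x_r^b f_{p,q}$ is a correct and useful identity, but it only yields a reduction to zero once you check that it (or the two-step rewriting it suggests) is a \emph{standard} representation for $\prec_i$: the leading monomial of each summand must not exceed that of the S-polynomial, and each reducer must lie in $\mathcal{G}_Y^{(i)}$ with the orientation, hence the leading term, that Lemma \ref{lema3} assigns to it --- and this depends on where $i$ sits relative to the four columns involved. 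This is exactly what the paper's long case analysis does: for each overlap type it identifies the leading monomial of the S-polynomial, exhibits the intermediate reducer and verifies which of its two monomials is leading (e.g.\ the two-step reduction in the case $j+1=m$ for two elements of $\mathcal{G}^{(i)}_1$), and shows that several overlap configurations simply cannot occur (e.g.\ $j+1=m$ with $f\in\mathcal{G}^{(i)}_1$ and $g\in\mathcal{G}^{(i)}_2$ forces $i<j+1<i$). Until this bookkeeping is executed, the Gr\"obner property is not established; as it stands the proposal is an accurate outline of the paper's proof rather than a proof.
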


\begin{proof}
First, let us see that $\mathcal{G}_Y^{(i)}$ is a Gr\"obner basis. By the Buchberger’s Criterion (see, e.g. \cite[Theorem 3.3]{MS05}), it suffices to verify that each S-pair of elements in $\mathcal{G}_Y^{(i)}$ can be reduced to zero by $\mathcal{G}_Y^{(i)}$ using the division algorithm. To do this we distinguish several cases:
\begin{itemize}
\item Let $f \in \mathcal{G}^{(i)}_1$, that is to say, $f = x_{j+1} x_k^b - x_j^b x_{k+1}$, for some $j \in \{i, \ldots, n-2\}$ and $k \in \{j+1,\ldots, n-1\}.$
\begin{itemize}
\item[$\circ$] Let $g = x_{l+1} x_m^b - x_l^b x_{m+1} \in \mathcal{G}^{(i)}_1$. If $\gcd(x_{j+1} x_k^b, x_{l+1} x_m^b) = 1$, then $S(f,g)$ reduces to zero with respect to $\{f,g\} \subset \mathcal{G}_Y^{(i)}$. Otherwise, $j = l, j+1 = m, k = l+1$ or $k = m$. If \framebox{$j=l$} then $S(f,g) = x_m^b (- x_j^b x_{k+1}) - x_k^b (- x_j^b x_{m+1}) = x_j^b (x_k^b x_{m+1} - x_m^b x_{k+1})$ reduces to zero with respect to $\mathcal{G}_Y^{(i)}$. If \framebox{$j+1 = m$}, then \[S(f,g) = x_{l+1} x_{j+1}^{b-1}(- x_j^b x_{k+1}) - x_k^b(-x_l^b x_{j+2}).\] Now, since $i \leq j < j+1 \leq k < k + 1$ and $i \leq l < l+1 \leq m=j+1 < j+2$, the leading term of $S(f,g)$ with respect to $\prec_i$ is $x_k^b x_l^b x_{j+2}$. Then $S(f,g) = x_l^b ( \underline{x_k^b x_{j+2}} - x_{j+1}^b x_{k+1}) + x_{j+1}^{b-1} x_{k+1}  (x_l^b x_{j+1} - x_{l+1} x_j^b)$ reduces to zero with respect to $\mathcal{G}_Y^{(i)}$. By symmetry, the case \framebox{$k=l+1$} is completely similar to the latter one.  Finally, if \framebox{$k=m$}, then $S(f,g) = -x_{l+1}(-x_j^b x_{k+1}) - x_{j+1}(-x_l^b x_{k+1}) =  x_{k+1}(x_j^b x_{l+1} - x_{j+1} x_l^b)$ reduces to zero with respect to $\mathcal{G}_Y^{(i)}$.
\item[$\circ$] Let $g = x_{l+1} x_m^b - x_l^b x_{m+1} \in \mathcal{G}^{(i)}_2$. If $\gcd(x_{j+1} x_k^b, x_{l+1} x_m^b) = 1$, then $S(f,g)$ reduces to zero with respect to $\{f,g\} \subset \mathcal{G}_Y^{(i)}$. Otherwise, $j = l, j+1 = m, k = l+1$ or $k = m$. First of all, we observe that the cases $j=l$ and $k=m$ produce the same S-polynomial as in the corresponding case for $g \in \mathcal{G}^{(i)}_1$; so, we just focus on the cases $j+1 =m$ and $k=l+1$. If \framebox{$j+1 =m$}, then $i \leq j < j+1 \leq k < k + 1$ and $l < l+1 \leq m=j+1 < j+2 = m-1 \leq i$, therefore $i < j+1 = m < i$, a contradiction. Finally, if \framebox{$k=l+1$}, then $i \leq j < j+1 \leq k < k + 1$ and $l < l+1=k \leq m < m+1 \leq i$, so $i < k = l+1 < i$, a contradiction again.
\item[$\circ$] Let $g = x_l^b x_{m+1} - x_{l+1} x_m^b \in \mathcal{G}^{(i)}_3$. If $\gcd(x_{j+1} x_k^b, x_l^b x_{m+1}) = 1$, then $S(f,g)$ reduces to zero with respect to $\{f,g\} \subset \mathcal{G}_Y^{(i)}$. Otherwise, $j+1 = l, j = m, k = l$ or $k = m+1$. If \framebox{$j+1 = l$}, then $i \leq j < j+1 = l \leq k < k + 1$ and $l \leq i-1$; so $i < j+1 = l \leq i-1$, a contradiction. If \framebox{$j = m$} (or \framebox{$k = l$}, respectively) then $S(f,g) = x_m^b(x_k^b x_{l+1} - x_l^b x_{k+1})$ (or $S(f,g) = x_{k+1}(x_{j+1} x_m^b - x_j^b x_{m+1}),$  respectively) reduces to zero with respect to $\mathcal{G}_Y^{(i)}$. Finally, if \framebox{$k=m+1$}, then \[S(f,g) = x_l^b(- x_j^b x_{m+2}) - x_{j+1} x_{m+1}^{b-1} (- x_{l+1} x_{m}^b).\] Now, since $i \leq j < j+1 \leq k = m+1 < k + 1$, $l \leq i-1$ and $i \leq m$, then $x_{l+1}$ or $x_{m-1}$ is cheaper than the others for the order induced by the last $n-1$ rows of the matrix $M$, therefore leading term of $S(f,g)$ is $x_l^b x_j^b x_{k+1}$ and thus, $S(f,g) = - x_j^b (\underline{x_l^b x_{m+2}} - x_{l+1} x_{m+1}^b) - x_{l+1} x_{m+1}^{b-1}(x_j^b x_{m+1}- x_{j+1} x_{m}^b)$ reduces to zero with respect to $\mathcal{G}_Y^{(i)}$.
\end{itemize}
\item Let $f \in \mathcal{G}^{(i)}_2$, that is to say, $f = x_{j+1} x_k^b - x_j^b x_{k+1}$, for some $j \in \{1, \ldots, i-2\}$ and $k \in \{j+1,\ldots, i-1\}.$
\begin{itemize}
\item[$\circ$] Let $g = x_{l+1} x_m^b - x_l^b x_{m+1} \in \mathcal{G}^{(i)}_2$. If $\gcd(x_{j+1} x_k^b, x_{l+1} x_m^b) = 1$, then $S(f,g)$ reduces to zero with respect to $\{f,g\} \subset \mathcal{G}_Y^{(i)}$. Otherwise, $j = l, j+1 = m, k = l+1$ or $k = m$. If \framebox{$j=l$} (or \framebox{$k = m$}, respectively), then $S(f,g) = x_j^b(x_k^bx_{m+1}-x_{k+1}x_m^b)$ (or $S(f,g) = x_{k+1}(x_{l+1}  x_j^b- x_l^bx_{j+1})$, respectively) reduces to zero with respect to $\mathcal{G}_Y^{(i)}$. If \framebox{$j+1 = m$}, then \[S(f,g) = x_{l+1} x_m^{b-1}(- x_{m-1}^b x_{k+1}) - x_k^b (- x_l^b x_{m+1})\] and, since $l+1 \leq m=j+1 \leq k \leq i-1$, the leading term of $S(f,g)$ is $ x_{l+1} x_m^{b-1} x_{m-1}^b x_{k+1}$. Thus, $S(f,g) = -x_m^{b-1} x_{k+1} (\underline{x_{l+1} x_{m-1}^b} - x_l^b x_m) + x_l^b(x_k^b x_{m+1} - x_m^{b} x_{k+1})$ reduces to zero with respect to $\mathcal{G}_Y^{(i)}$; observe that $l < l+1 \leq m$ implies that the leading term of $x_{l+1} x_{m-1}^b - x_l^b x_m$ is actually $x_{l+1} x_{m-1}^b$. Finally, by symmetry, the case \framebox{$k=l+1$} is completely similar to the latter one.
\item[$\circ$] Let $g = x_l^b x_{m+1} - x_{l+1} x_m^b \in \mathcal{G}^{(i)}_3$. If $\gcd(x_{j+1} x_k^b, x_l^b x_{m+1}) = 1$, then $S(f,g)$ reduces to zero with respect to $\{f,g\} \subset \mathcal{G}_Y^{(i)}$. Otherwise, $j+1 = l, j = m, k = l$ or $k = m+1$. If \framebox{$j+1=l$}, then \[S(f,g) = x_l^{b-1} x_{m+1} (-x_{l-1}^b x_{k+1}) -x_k^b ( - x_{l+1} x_m^b).\] Furthermore, since $l = j+1 \leq k \leq i-1$ and $l \leq i-1 < i \leq m < m+1$, we have that the leading term is $x_l^{b-1} x_{m+1} x_{l-1}^b x_{k+1}$ if $k=l$ and $x_k^b x_{l+1} x_m^b$ otherwise. In ther first case, $S(f,g) =  x_{m+1}x_{k-1}^b +x_k x_m^b$ reduces to zero with respecto to $\mathcal{G}_Y^{(i)}$. In the second case, $S(f,g) = x_m^b (\underline{x_k^b x_{l+1}} - x_{k+1} x_l^b) +  x_l^{b-1}  x_{k+1} (x_m^b x_l -   x_{m+1} x_{l-1}^b)$ reduces to zero with respect to $\mathcal{G}_Y^{(i)}$. If \framebox{$j = m$} (or \framebox{$k = l$}, respectively) then $S(f,g) = x_m^b(x_k^b x_{l+1} - x_l^b x_{k+1})$ (or $S(f,g) = x_{k+1}(x_{j+1} x_m^b - x_j^b x_{m+1}),$  respectively) reduces to zero with respect to $\mathcal{G}_Y^{(i)}$. Finally, if \framebox{$k=m+1$}, then 
$j+1 \leq k = m+1 \leq i-1, l \leq i-1$ and $i \leq m$; so, $m+1 < i \leq m$, a contradiction.
\end{itemize}
\item Let $f \in \mathcal{G}^{(i)}_3$, that is to say, $f = x_j^b x_{k+1} - x_{j+1} x_k^b,$ for some $j \in \{1, \ldots, i-1\}$ and $k \in \{i,\ldots, n-1\}$.
\begin{itemize}
\item[$\circ$] Let $g = x_l^b x_{m+1} - x_{l+1} x_m^b \in \mathcal{G}^{(i)}_3$. If $\gcd(x_j^b x_{k+1}, x_l^b x_{m+1}) = 1$, then $S(f,g)$ reduces to zero with respect to $\{f,g\} \subset \mathcal{G}_Y^{(i)}$. Otherwise, $j = l, j = m+1, k+1 = l$ or $k = m$. Since $j \leq i-1 < i \leq k$ and $l \leq i-1 < i \leq m$, the cases \framebox{$j = m+1$} and \framebox{$k+1 = m$} cannot occur. If \framebox{$j = l$} (or \framebox{$k=m$}, respectively), then $S(f,g) = x_{l+1}(x_{k+1} x_m^b - x_{m+1}x_k^b)$ (or $S(f,g) = x_m^b (x_l^b x_{j+1} - x_j^b x_{l+1} )$, respectively) reduces to zero with respect $\mathcal{G}_Y^{(i)}$.
\end{itemize}
\end{itemize}
Once we know that $\mathcal{G}_Y^{(i)}$ is Gr\"obner basis, it is immediate to see that it is reduced since the leading term of $f \in \mathcal{G}_Y^{(i)}$ does not divide any other monomial that appears in a binomial of $\mathcal{G}_Y^{(i)} \setminus \{f\}$.

It remains to prove that $\mathcal{G}_Y^{(i)}$ generates $I_2(Y)$. Clearly, $\mathcal{G}_Y^{(i)}$ is contained in the set of $2 \times 2-$minors of $Y$. Moreover, since the cardinality of $\mathcal G_1, \mathcal G_2$ and $\mathcal G_3$ are 
\[\Big((n-1)-i\Big) + \Big((n-1)-i-1\Big) + \ldots + 1 = \binom{n-i}2,\]
\[\Big((i-1)-1\Big) + \Big((i-1)-2 \Big) + \ldots + 1 = \binom{i-1}2\]
and
\[(i-1)(n-i),\]
respectively, we have that the cardinality of $\mathcal{G}_Y^{(i)}$ is equal to $\binom{n-1}2$ which is the number of $2 \times 2-$minors of $Y$. Therefore $\mathcal{G}_Y^{(i)}$ generates $I_2(Y)$ and we are done.
\end{proof}

\begin{Example}\label{Ex5}
We observe that the reduced Gr\"obner basis, $\mathcal{G}_Y^{(i)}$, of $I_2(Y)$ with respect to $\prec_i$ is not an universal Gr\"obner basis. For example, if $n=b=5$ and $\prec$ is the term order defined by 
\[
\left(
\begin{array}{ccccc}
a_1 & a_2 & a_3 & a_4 & a_5 \\
0 & 0 & -1 & 0 & 0 \\
0 & 0 & 0 & 0 & -1 \\
0 & 0 & 0 & -1 & 0 \\
0 & -1 & 0 & 0 & 0
\end{array}
\right),
\]
then one can check (using, for example, Singular \cite{singular}) that the reduced Gr\"obner basis of the ideal $I_2(Y)$ with respect to $\prec$ has eight generators; but $\mathcal{G}_Y^{(i)}$ contains $\binom{5-1}2 = 6$ binomials only. 

Alternatively, one can see that $\mathcal{G}_Y^{(i)}$ is not an universal Gr\"obner basis of $I_2(Y)$ by using \cite[Theorem 4.1]{BR15}.
\end{Example}

We now consider the $2 \times n-$integer matrix $B$ whose $j-$th column is \[\mathbf{a}_j := \left(\begin{array}{c} r_b(j-1)\\ 1 \end{array}\right),\quad j = 1, \ldots, n.\] 

\begin{Remark}
Observe that $a_j = \big(a, r_b(n)\big) \cdot \mathbf{a}_j,$ for every $j = 1, \ldots, n.$
\end{Remark}

Notice that the semigroup ideal associated to $\{\mathbf{a}_1, \ldots, \mathbf{a}_n\}$ is equal to $J$; indeed, $J$ is the kernel of \eqref{ecu3}.

\begin{Corollary}\label{Cor GB1}
The ideal $J$ is minimally generated by the $2 \times 2-$minors of $Y$. Moreover, $J$ has a unique minimal system of binomial generators.
\end{Corollary}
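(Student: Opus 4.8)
The statement has two parts, and the plan is to establish first the equality $J = I_2(Y)$ (from which minimality is immediate) and then the indispensability of the minors. For the inclusion $I_2(Y) \subseteq J$ I would check that each $2 \times 2$ minor $x_j^b x_{k+1} - x_{j+1} x_k^b$ (with $1 \le j < k \le n-1$) lies in $J$: by the Remark preceding the statement this amounts to the identity $b\,\mathbf a_j + \mathbf a_{k+1} = \mathbf a_{j+1} + b\,\mathbf a_k$ in $\mathbb Z^2$, whose second coordinate is trivial and whose first coordinate, $b\,r_b(j-1) + r_b(k) = r_b(j) + b\,r_b(k-1)$, follows immediately from $b\,r_b(\ell) = r_b(\ell+1) - 1$. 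Hence the two terms of each minor have the same image under \eqref{ecu3}.

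For the reverse inclusion I would use that $\prec_i$ is $\mathcal A$-graded and that, by Proposition \ref{Prop GB1}, $\mathcal G_Y^{(i)}$ is a Gr\"obner basis of $I_2(Y)$ for $\prec_i$. Since $I_2(Y) \subseteq J$ and $J$ is the kernel of \eqref{ecu3}, it is enough to prove that \eqref{ecu3} is injective on the set of $\prec_i$-standard monomials of $I_2(Y)$. Granting this, write an arbitrary $f \in J$ as a sum of its homogeneous components with respect to \eqref{ecu3}; reducing each component modulo $\mathcal G_Y^{(i)}$ (whose binomials preserve the image under \eqref{ecu3}) yields a remainder supported on standard monomials all having the same image, so by injectivity it is a scalar multiple of a single monomial, hence zero, as $J$ contains no monomial. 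Thus $f \in I_2(Y)$ and $J = I_2(Y)$. To establish the injectivity I would specialize to the simplest order $\prec_1$, for which $\mathrm{in}_{\prec_1}(I_2(Y)) = \langle x_p x_q^b \mid 2 \le p \le q \le n-1\rangle$, and show that two distinct monomials not divisible by any $x_p x_q^b$ ($2 \le p \le q \le n-1$) necessarily have distinct images under \eqref{ecu3}; this fiber-separation is the main obstacle, and it is exactly here that the base-$b$ carrying behaviour of the repunits $r_b(\ell)$ lets one recover the exponent vector of a standard monomial from its image. Minimality is then immediate: the $\binom{n-1}{2}$ minors are homogeneous of the common total degree $b+1$ with pairwise distinct leading monomials, hence $\Bbbk$-linearly independent, and since the reduced Gr\"obner basis lies entirely in degree $b+1$ the ideal $J$ has no nonzero element of smaller degree, so $\dim_\Bbbk J_{b+1} = \binom{n-1}{2}$ coincides with the minimal number of generators.

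For uniqueness I would show that each minor is indispensable. Writing $w_{j,k}$ for the common image under \eqref{ecu3} of the two terms of $x_j^b x_{k+1} - x_{j+1} x_k^b$, the plan is to prove that $x_j^b x_{k+1}$ and $x_{j+1} x_k^b$ are the only two monomials with image $w_{j,k}$; then this minor is, up to a scalar, the only binomial of $J$ whose terms map to $w_{j,k}$, so it must appear in every minimal binomial system of generators. Since the values $w_{j,k}$ are pairwise distinct and the minors already generate $J$, any minimal binomial system of generators must coincide with the set of minors, which is the desired uniqueness. As in the equality step, the crux is the fiber computation --- excluding any third monomial of total degree $b+1$ whose image equals $w_{j,k}$ --- and it is again the repunit structure that makes this possible.
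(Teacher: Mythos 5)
Your plan for the inclusion $I_2(Y)\subseteq J$ matches the paper's, and your general architecture for the converse (inject the $\prec_i$-standard monomials into the target semigroup) and for uniqueness (show each minor is indispensable by computing its fiber) is a legitimate strategy in principle. But there is a genuine gap: the ``fiber-separation'' step, which you yourself flag as ``the main obstacle'' and ``the crux'', is asserted rather than proved, and it carries essentially all of the remaining content of the statement. Concretely, you need (i) that two distinct monomials outside $\langle x_p x_q^b \mid 2\le p\le q\le n-1\rangle$ have distinct images under \eqref{ecu3}, and (ii) that the fiber of \eqref{ecu3} over the common image of the two terms of each minor contains no third monomial. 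Neither is routine: after normalizing, both reduce to showing that a constrained exponent vector $(u_1,\dots,u_n)$ is recoverable from the pair $\bigl(\sum_j u_j,\ \sum_j u_j b^{j-1}\bigr)$, and the shape of the standard monomials (one distinguished index in $\{2,\dots,n-1\}$ allowed to reach exponent $b$, with $u_1$ and $u_n$ unbounded) makes the base-$b$ ``carrying'' analysis delicate. A gesture at ``the repunit structure'' does not discharge it, and without it neither $J\subseteq I_2(Y)$ nor the indispensability of the minors is established.

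For contrast, the paper sidesteps fibers entirely. For the equality it exhibits an explicit $(n-2)\times n$ integer matrix $C$ whose rows span $\ker_{\mathbb Z}(B)$ with torsion-free quotient (checked via a unimodular maximal minor), so that $J = I_C : \bigl(\prod_j x_j\bigr)^\infty$ by \cite[Lemma 7.6]{MS05}; since $I_C\subseteq I_2(Y)$ and $I_2(Y)$ is saturated with respect to each $x_i$ (because, by Proposition \ref{Prop GB1} and \cite[Theorem 3.1]{BSR99}, $x_i$ is the cheapest variable for the degrevlex order $\prec_i$ and the reduced Gr\"obner basis $\mathcal{G}_Y^{(i)}$ certifies $I_2(Y):x_i^\infty = I_2(Y)$), the inclusion $J\subseteq I_2(Y)$ follows. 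Uniqueness is then a direct appeal to \cite[Corollary 14]{OV2}, using that the minors are simultaneously the reduced Gr\"obner basis for every $\prec_i$. If you want to keep your more hands-on route, you must actually carry out the injectivity argument; otherwise, note that the family of orders $\prec_i$ constructed in Section \ref{S2} was designed precisely so that the saturation and indispensability criteria apply without any fiber computation.
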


\begin{proof}
Let $I_2(Y)$ the ideal generated by the $2 \times 2-$minors of $Y$. Since $b\, \mathbf{a}_j + \mathbf{a}_{k+1} = \mathbf{a}_{j+1} + b\, \mathbf{a}_k$ for every $j$ and $k$, we have that $I_2(Y) \subseteq J$. 

Conversely, let $C$ be the $(n-2) \times n-$matrix 
\[
\left(
\begin{array}{ccccccccccc}
b & -1 & -b &  1 & 0 & \ldots & 0 & 0 & 0 & 0 \\
0 &  b & -1 & -b & 1 & \ldots & 0 & 0 & 0 & 0 \\
0 & 0 &  b & -1 & -b & \ldots & 0 & 0 & 0 & 0 \\
0 & 0 & 0 &  b & -1 & \ldots & 0 & 0 & 0 & 0 \\
0 & 0 & 0 & 0 & b  & \ldots & 0 & 0 & 0 & 0 \\
\vdots & \vdots & \vdots & \vdots & \vdots & \ddots & \vdots & \vdots & \vdots & \vdots \\
0 & 0 & 0 & 0 & 0 & \ldots & b & -1 & -b & 1\\
0 & 0 & 0 & 0 & 0 & \ldots & 0 & b & -(b+1) & 1\\
\end{array}
\right)
\]
and let $I_{C}$ be the ideal of $\Bbbk[x_1, \ldots, x_n]$ generated by \[\{ \mathbf{x}^{\mathbf{u}_+} - \mathbf{x}^{\mathbf{u}_-}\ \mid\ \mathbf{u}\ \text{is a row of}\ {C} \},\] where $\mathbf{u}_+$ and $\mathbf{u}_-$ denote the positive and negative parts of $\mathbf{u}$, respectively. Clearly, $I_{C} \subseteq I_2(Y)$.

Now, since the determinant of the submatrix of $C$ consisting in the last $n-2$ columns is $1$, the rows of $C$ generates a rank $n-2$ subgroup $G_{C}$ of $\mathbb{Z}^n$ such that $\mathbb{Z}^n/G_{C}$ is torsion free. Moreover, since $B\, C^\top = 0$, we conclude that the rows of $C$ generate $\ker_\mathbb{Z}(B)$. Therefore, by \cite[Lemma 7.6]{MS05}, \[J = I_{C}: \Big(\prod_j x_j \Big)^\infty  \subseteq I_2(Y) : \Big(\prod_j x_j \Big)^\infty .\] By Proposition \ref{Prop GB1} and \cite[Theorem 3.1]{BSR99}, we have that $I_2(Y):x_i^\infty = I_2(Y)$ for every $i = 1, \ldots, n$. So, $I_2(Y) : (\prod_j x_j )^\infty = I_2(Y)$ and, consequently, $J \subseteq  I_2(Y)$ as desired. 

Finally, by Proposition \ref{Prop GB1}, we conclude that the $2 \times 2-$minors of $Y$ form a minimal system generators of $J$ and,  \cite[Corollary 14]{OV2}, we conclude that $J$ has a unique minimal system of binomial generators.
\end{proof}

We recall that semigroup ideals minimally generated by a Graver basis have unique minimal system of binomials generators (see \cite[Corollary 16]{OV2}). As Graver bases are in particular universal Gr\"obner bases (see \cite[Proposition 4.11]{Stu96}), by Example \ref{Ex5}, we can assure the minimal system of binomial generators of $J$ is not a Graver basis. 

\section{Gr\"obner basis and minimal generators for $I$}

We keep the notation from the Introduction and Section \ref{S2} and we set
$\mathcal G_4$ to be equal to \[\left\{\underline{x_1^{a+1} x_l^b} - x_{l+1} x_n^b \ \mid\ l = 1, \ldots, i-1  \right\}\bigcup \left\{\underline{x_{l+1} x_n^b} - x_1^{a+1} x_l^b \mid\ l = i, \ldots, n-1  \right\},\] where the underlined monomials again highlight the leading terms with respect to $\prec_i$ of the corresponding binomials.

Let $I_2(X)$ be the ideal of $\Bbbk[x_1, \ldots, x_n]$ generated by the $2 \times 2-$minors of the matrix $X$ defined in \eqref{ecu1}.

\begin{Theorem}\label{Prop GB2}
The set $\mathcal{G} = \mathcal{G}_Y^{(i)} \cup \mathcal{G}^{(i)}_4$ is a minimal Gr\"obner basis of $I_2(X)$ with respect to $\prec_i$. In particular, the cardinality of $\mathcal G$ is $\binom{n}2$.
\end{Theorem}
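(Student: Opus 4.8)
The plan is to show that $\mathcal{G} = \mathcal{G}_Y^{(i)} \cup \mathcal{G}^{(i)}_4$ is a Gr\"obner basis of $I_2(X)$ by verifying Buchberger's Criterion, and then to argue minimality separately. The binomials in $\mathcal{G}^{(i)}_4$ are precisely the $2 \times 2$-minors of $X$ involving the last column $(x_n^b, x_1^{a+1})^\top$, so $\mathcal{G}$ consists of exactly the $\binom{n}{2}$ minors of $X$; since $\mathcal{G}_Y^{(i)}$ is already known (Proposition \ref{Prop GB1}) to be the reduced Gr\"obner basis of $I_2(Y)$ with $\binom{n-1}{2}$ elements and $|\mathcal{G}^{(i)}_4| = n-1$, the cardinality count $\binom{n-1}{2} + (n-1) = \binom{n}{2}$ is automatic once generation is established.

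I would first invoke the key identity from Lemma \ref{lema2} to fix the leading terms of the new binomials. The relation $b\, a_1 + a_{l+1} = a_2 + b\, a_l$ together with $a_{n+1} = (1+a)\, r_b(n) = (a+1)\,a_1$ ensures that $x_1^{a+1} x_l^b$ and $x_{l+1} x_n^b$ are $\mathcal{A}$-homogeneous of the same degree, so each element of $\mathcal{G}^{(i)}_4$ is a genuine binomial in $I_2(X)$; the underlined terms being the leading ones follows from the same variable-cheapness analysis as in Lemma \ref{lema3}, where $x_1$ is cheapest when $1 \leq i-1$ (i.e. the $l \le i-1$ branch) and $x_n$ is cheapest in the complementary branch. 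The heart of the proof is then the S-pair computation: I would organize it by the type of the two generators involved. The S-pairs with both generators in $\mathcal{G}_Y^{(i)}$ already reduce to zero by Proposition \ref{Prop GB1}, so the genuinely new work is (i) S-pairs between an element of $\mathcal{G}^{(i)}_4$ and an element of $\mathcal{G}_Y^{(i)}$, and (ii) S-pairs between two elements of $\mathcal{G}^{(i)}_4$.

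For the new S-pairs I expect to proceed exactly as in the proof of Proposition \ref{Prop GB1}: discard coprime-leading-term pairs by Buchberger's first criterion, and for the overlapping cases identify the gcd of the two leading monomials, form the S-polynomial, and exhibit it as an explicit $\mathcal{G}$-combination that cancels the leading term, iterating the division until reaching zero. The overlaps in case (i) come from the shared variables $x_n$, $x_1$, $x_{l+1}$, or $x_l^b$ appearing in both a $Y$-minor and an $X$-minor; in case (ii) two elements of $\mathcal{G}^{(i)}_4$ can overlap only through the common factor $x_n^b$ (in the $l \ge i$ branch) or $x_1^{a+1}$ (in the $l \le i-1$ branch), and these should reduce through a telescoping use of the $\mathcal{G}_Y^{(i)}$ relations, much as the $j+1=m$ subcase was handled before. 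I would also watch the sign conventions dictating which branch of $\mathcal{G}^{(i)}_4$ applies, since the index threshold at $i$ splits the generators into two families with opposite leading terms, mirroring the $\mathcal{G}^{(i)}_2$ versus $\mathcal{G}^{(i)}_3$ split.

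Minimality is the cleaner part: since each leading term is a distinct monomial and, by inspection, no leading monomial of an element of $\mathcal{G}$ divides the leading monomial of another (the exponent $a+1$ on $x_1$ and the power $x_n^b$ keep the $\mathcal{G}^{(i)}_4$ leading terms from being divisible by the quadratic-in-$b$ $Y$-leading terms, and vice versa), no generator is redundant, so $\mathcal{G}$ is a minimal Gr\"obner basis. The main obstacle I anticipate is purely bookkeeping: the case analysis in (i)-(ii) is large because the position of $i$ relative to the indices $l, j, k$ creates several subcases, and one must repeatedly check that the intermediate monomials produced after the first reduction step are themselves divisible by some leading term in $\mathcal{G}$ and that the process terminates at zero rather than at a nonzero normal form. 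I would rely heavily on Lemma \ref{lema2} to certify at each step that the binomials produced are homogeneous and hence lie in the ideal, and on Lemma \ref{lema3} (extended to the $x_1^{a+1}$ and $x_n^b$ entries) to keep track of which monomial leads at each stage.
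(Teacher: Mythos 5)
Your outline follows the paper's proof essentially step for step: reduce Buchberger's criterion to the S-pairs mixing $\mathcal{G}_Y^{(i)}$ with $\mathcal{G}^{(i)}_4$ (the pairs internal to $\mathcal{G}_Y^{(i)}$ being disposed of by Proposition \ref{Prop GB1}), get minimality by checking that no leading term divides another, and get generation of $I_2(X)$ from the count $\binom{n-1}{2}+(n-1)=\binom{n}{2}$ together with the observation that $\mathcal{G}^{(i)}_4$ consists exactly of the minors involving the last column of $X$. The one place you diverge is your case (ii): the paper discards the S-pairs internal to $\mathcal{G}^{(i)}_4$ by asserting that the leading terms of its binomials are pairwise relatively prime, which is not literally true (two leading terms from the same branch share the factor $x_1^{a+1}$, respectively $x_n^b$); you correctly flag these overlaps and predict that they reduce via $\mathcal{G}_Y^{(i)}$, which they do, since $S(f,g)=\pm x_n^b\bigl(x_{l+1}x_m^b-x_l^bx_{m+1}\bigr)$ in the first branch and $\pm x_1^{a+1}\bigl(x_{l+1}x_m^b-x_l^bx_{m+1}\bigr)$ in the second, with the parenthesized binomial lying in $\mathcal{G}^{(i)}_2$, resp.\ $\mathcal{G}^{(i)}_1$. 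On that point your plan is more careful than the paper. What is missing is the substance of the paper's proof: the explicit reduction to zero of each overlapping S-pair $S(f,g)$ with $f\in\mathcal{G}^{(i)}_1\cup\mathcal{G}^{(i)}_2\cup\mathcal{G}^{(i)}_3$ and $g\in\mathcal{G}^{(i)}_4$, including the index constraints that rule out certain overlaps and the split into the subcases $a+1\ge b$ and $a+1<b$ when $f\in\mathcal{G}^{(i)}_3$ overlaps $g$ through $x_1$. Until those divisions are written out, the Gr\"obner basis claim remains a programme rather than a proof, though the programme is the right one.
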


\begin{proof}
Proceeding as in the proof of Proposition \ref{Prop GB1}, we first need to prove that $S(f,g)$ reduces to zero with respect to $\mathcal{G}^{(i)}$, for every $f, g \in \mathcal{G}^{(i)}$. However, since, by Proposition \ref{Prop GB1}, $\mathcal{G}_Y^{(i)}$ is already a Gr\"obner basis with respect to $\prec_i$ and the leading terms with respect to $\prec_i$ of the binomials in $\mathcal{G}^{(i)}_4$ are relatively prime, it suffices to prove that $S(f,g)$ reduces to zero with respect to $\mathcal{G}^{(i)}$, for every $f \in \mathcal{G}_Y^{(i)}$ and $g \in \mathcal{G}^{(i)}_4$. To do this we distinguish three cases:
\begin{itemize}
\item $f \in \mathcal{G}^{(i)}_1 = \big\{ \underline{x_{j+1} x_k^b} - x_j^b x_{k+1} \ \mid\ j \in \{i, \ldots, n-2\},\ k \in \{j+1,\ldots, n-1\}\big\}.$ If $j \neq l$ and $k \neq l+1$, then the leading terms of $f$ and $g$ are relatively prime and there is nothing to prove. So, it suffices to consider the cases $j = l$ or 
$k = l+1$.
\begin{itemize}
\item[$\circ$] If $j = l$, then $l \geq i$, otherwise the leading terms of $f$ and $g$ are relatively prime, and $S(f,g) = x_n^b (-x_l^b x_{k+1}) - x_k^b (-x_1^{a+1} x_l^b) = -x_l^b( x_{k+1} x_n - x_1^{a+1} x_l^b)$ reduces to zero with respect to $\mathcal{G}^{(i)}_4$. 
\item[$\circ$] If $k = l+1$ then $n-2 \geq k-1 = l \geq j \geq i$, otherwise the leading terms of $f$ and $g$ are relatively prime, and $S(f,g) = x_n^b (-x_j^b x_{l+2}) - x_{j+1} x_{l+1}^{b-1} (-x_1^{a+1} x_l^b) = \underline{- x_j^b x_n^b x_{l+2}} + x_1^{a+1}  x_{j+1} x_{l+1}^{b-1} x_l^b$. Observe that the leading term of $S(f,g)$ is divisible by the leading term of $h := \underline{x_n^b x_{l+2}} - x_1^{a+1} x_{l+1}^b  \in \mathcal{G}^{(i)}_4$. So, $S(f,g) = x_j^b h - x_j^b x_1^{a+1} x_{l+1}^b + x_1^{a+1}  x_{j+1} x_{l+1}^{b-1} x_l^b = x_j^b h - x_1^{a+1} x_{l+1}^{b-1} (x_j^b x_{l+1} - x_{j+1}x_l^b)$. Now, since $x_j^b x_{l+1} - x_{j+1}x_l^b \in \mathcal{G}_Y^{(i)}$, we are done. 
\end{itemize}
\item $f \in \mathcal{G}^{(i)}_2 = \big\{ \underline{x_{j+1} x_k^b} - x_j^b x_{k+1} \ \mid\ j \in \{1, \ldots, i-2\},\ k \in \{j+1,\ldots, i-1\}\big\}$. If $j+1 \neq l$ and $k \neq l$, then the leading terms of $f$ and $g$ are relatively prime and there is nothing to prove. So, it suffices to consider the cases $j = l -1$ or $k = l$.
\begin{itemize}
\item[$\circ$] If $j+1 = l$, then $1 \leq j = l-1 < k \leq i-1$, otherwise the leading terms of $f$ and $g$ are relatively prime, and $S(f,g) = x_1^{a+1} x_l^{b-1} (-x_{l-1}^b x_{k+1}) - x_k^b  (- x_{l+1} x_n^b) = x_k^b x_{l+1} x_n^b - x_1^{a+1} x_{l-1}^b x_l^{b-1} x_{k+1}$. If $l=k$, then the S-polynomial $S(f,g) =  x_k^b x_{k+1} x_n^b - x_1^{a+1} x_{k-1}^b x_k^{b-1} x_{k+1} = - x_k^{b-1} x_{k+1}(\underline{x_1^{a+1} x_{k-1}^b} - x_k x_n^b)$ reduces to zero with respect to $\mathcal{G}^{(i)}_4$; otherwise, the leading term of $S(f,g)$ is $x_k^b x_{l+1} x_n^b$ which is divisible by the leading term of $h := \underline{x_k^b x_{l+1}} - x_{k+1} x_l^b \in \mathcal{G}^{(i)}_4$. 
So, $S(f,g) = x_n^b h + x_n^b x_{k+1} x_l^b  - x_1^{a+1} x_{l-1}^b x_l^{b-1} x_{k+1} = x_n^b h - x_l^{b-1} x_{k+1}(x_1^{a+1} x_{l-1}^b - x_n^b x_l).$ Now, since $x_1^{a+1} x_{l-1}^b - x_n^b x_l \in \mathcal{G}^{(i)}_4$, we are done.
\item[$\circ$] If $k=l$, then $1 \leq j < k = l \leq i-1$, otherwise the leading terms of $f$ and $g$ are relatively prime, and $S(f,g) = x_1^{a+1} (- x_j^b x_{l+1} ) - x_{j+1} (- x_{l+1} x_n^b )= - x_{l+1}( x_1^{a+1} x_j^b - x_{j+1} x_n^b).$ Now, since $x_1^{a+1} x_j^b - x_{j+1} x_n^b \in \mathcal{G}^{(i)}_4$, we are done.
\end{itemize}
\item $f \in \mathcal{G}^{(i)}_3 =  \big\{ \underline{x_j^b x_{k+1}} - x_{j+1} x_k^b \ \mid\ j \in \{1, \ldots, i-1\},\ k \in \{i,\ldots, n-1\}\big\}$. If $j \neq 1, j \neq l, k \neq l$ and $k \neq n-1$, then the leading terms of $f$ and $g$ are relatively prime and there is nothing to prove. So, is suffices so consider the cases $j=1, j = l, k=l$ or $k = n-1$.
\begin{itemize}
\item[$\circ$] If $j=1$, then, in particular, $l < i$, otherwise the leading terms of $f$ and $g$ are relatively prime. Now, if $a+1 \geq b$, then $S(f,g)= \underline{x_1^{a+1-b} x_l^b (- x_2 x_k^b)} - x_{k+1} x_{l+1} x_n^b$ and its leading term is divisible by the leading term of $h := \underline{x_l^b x_2} - x_{l+1} x_1^b \in \mathcal{G}^{(i)}_2 \cup \mathcal{G}^{(i)}_3$; then $S(f,g) = x_1^{a+1-b} x_k^b h - x_1^{a+1-b} x_k^b (- x_{l+1} x_1^b) - x_{k+1} x_{l+1} x_n^b = x_1^{a+1-b} x_k^b h - x_{l+1} (\underline{x_{k+1} x_n^b} - x_1^{a+1} x_k^b)$ which reduces to zero with respect to $\mathcal{G}^{(i)}_2 \cup \mathcal{G}^{(i)}_3 \cup \mathcal{G}^{(i)}_4$. Otherwise, if $a+1 < b$, then $S(f,g) = \underline{x_l^b (-x_2 x_k^b)} - x_1^{b-a-1} x_{k+1} (- x_{l+1} x_n^b)$ and its leading term is divisible by the leading term of $h := \underline{x_l^b x_2} - x_{l+1} x_1^b \in \mathcal{G}^{(i)}_2 \cup \mathcal{G}^{(i)}_3$; then $S(f,g) = x_k^b h - x_k^b (-x_{l+1} x_1^b) - x_1^{b-a-1} x_{k+1}(- x_{l+1} x_n^b) = x_k^b h - x_1^{b-a-1} x_{l+1} (x_{k+1} x_n^b-x_1^{a+1} x_k^b)$ which reduces to zero with respect to $\mathcal{G}^{(i)}_2 \cup \mathcal{G}^{(i)}_3 \cup \mathcal{G}^{(i)}_4$, too.
\item[$\circ$] If $j=l$, then $1 \leq l \leq i-1$, otherwise the leading terms of $f$ and $g$ are relatively prime, and $S(f,g)= x_1^{a+1} (- x_{l+1} x_k^b) - x_{k+1} (-x_{l+1} x_n^b) = -x_{l+1} (x_{k+1} x_n^b - x_1^{a+1} x_k^b)$ which reduces to zero with respect to $\mathcal{G}^{(i)}_4$.
\item[$\circ$] If $k=l$, then $i \leq l \leq n-1$, otherwise the leading terms of $f$ and $g$ are relatively prime, and $S(f,g) = x_n^b(-x_{j+1} x_l^b) - x_j^b (-x_1^{a+1} x_l^b) = x_l^b (\underline{x_1^{a+1} x_j^b} - x_{j+1} x_n^b)$ which reduces to zero with respect to $\mathcal{G}^{(i)}_4$.
\item[$\circ$] If $k=n-1$, then $1 \leq j < i \leq l < n$, otherwise the leading terms of $f$ and $g$ are relatively prime. In this case, $S(f,g)= x_{l+1} x_n^{b-1} (-x_{j+1} x_{n-1}^b) \underline{- x_j^b (-x_1^{a+1} x_l^b)}$ and, since the leading term of $S(f,g)$ is divisible by the leading term of $h:=\underline{x_1^{a+1} x_j^b} - x_{j+1} x_n^b \in \mathcal{G}^{(i)}_4$, we have that $S(f,g) = x_l^b h - x_l^b (- x_{j+1} x_n^b) +  x_{l+1} x_n^{b-1} (-x_{j+1} x_{n-1}^b) = x_l^b h - x_{j+1} x_n^{b-1}(\underline{x_{l+1} x_{n-1}^b} - x_l^b x_n)$, and since $\underline{x_{l+1} x_{n-1}^b} - x_l^b x_n$ belongs to $\mathcal{G}^{(i)}_1$, we are done.  
\end{itemize}
\end{itemize}
Now, since $S(f,g)$ reduces to zero with respect to $\mathcal{G}^{(i)}$ in all the three cases we conclude that $\mathcal{G}^{(i)}$ forms a Gr\"obner basis.

Once we know that $\mathcal{G}^{(i)}$ is a Gr\"obner basis, we observe that the leading terms of the binomials in $\mathcal{G}^{(i)}$ are not divisible by the leading term of any other binomial in $\mathcal{G}^{(i)}$ other than itself. That is to say, the Gr\"obner basis $\mathcal{G}^{(i)}$ is minimal.

Clearly, $\mathcal{G}^{(i)}$ is a subset of $2 \times 2-$minors of $X$. Moreover, its cardinality is equal to the cardinality of $\mathcal{G}_Y^{(i)},$ that is $\binom{n-1}2$, plus the cardinality, $n-1$, of $\mathcal{G}^{(i)}_4$. Therefore, $\mathcal{G}^{(i)}$ has cardinality equal to $\binom{n-1}2 + (n-1) = \binom{n}2$ which is the number of $2 \times 2-$minors of $X$. Hence we conclude that $\mathcal{G}^{(i)}$ generates $I_2(X)$ and we are done.
\end{proof}

\begin{Example}
The minimal Gr\"obner basis, $\mathcal{G}^{(i)}$, of $I_2(X)$ with respect to $\prec_i$ is not reduced in general. For example, if $n = 4, a = 3$ and $b = 3$, then one can see (using, e.g. Singular \cite{singular}) that the binomial $x_4^4 - x_1 x_2^4 x_3^2 $ belongs to the Gr\"obner basis of $I_2(X)$ with respect to $\prec_2$, however, $x_4^4 - x_1 x_2^4 x_3^2$ is not a minor of $X$.
\end{Example}

\begin{Corollary}\label{Cor GB2}
If $\gcd(a, r_b(n)) = 1$, then the ideal $I$ is minimally generated by the $2 \times 2-$minors of $X$. In this case, if $n > 3$, then $I$ has a unique minimal system of generators if and only if  and $a < b-1$.
\end{Corollary}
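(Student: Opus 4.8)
The plan is to split the statement into two parts: first the ideal-theoretic identity $I = I_2(X)$, from which minimal generation follows via Theorem \ref{Prop GB2}, and then the characterization of uniqueness. Throughout I would write $A = (a_1, \ldots, a_n) = \big(a, r_b(n)\big)\cdot B$, so that $I$ is the lattice ideal attached to $L := \ker_{\mathbb Z}(A)$, and mimic the saturation argument of Corollary \ref{Cor GB1}. The inclusion $I_2(X) \subseteq I$ costs nothing: every $2\times 2$ minor of $X$ is $\mathbf x^{\mathbf u_+} - \mathbf x^{\mathbf u_-}$ with $\mathbf u \in L$, since $b\,a_j + a_{k+1} = a_{j+1} + b\,a_k$ and $b\,a_l + a_{n+1} = a_{l+1} + b\,a_n$ are instances of Lemma \ref{lema2} (recall $a_{n+1} = (a+1)\,a_1$).

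For the reverse inclusion I would use that $\gcd(a_1,\ldots,a_n) = \gcd(a, r_b(n)) = 1$ makes $\mathbb Z^n/L$ torsion free, so \cite[Lemma 7.6]{MS05} gives $I = I_{C'} : \big(\prod_j x_j\big)^\infty$ for any integer matrix $C'$ whose rows generate $L$. I would take $C'$ to be the matrix $C$ of Corollary \ref{Cor GB1} with one extra row $\mathbf w = (a+1)\mathbf e_1 + b\,\mathbf e_l - \mathbf e_{l+1} - b\,\mathbf e_n$ appended, the exponent vector of a column-$(l,n)$ minor of $X$ (an element of $\mathcal G^{(i)}_4$). The rows of $C$ generate $\ker_{\mathbb Z}(B)$, and since $\gcd(a, r_b(n)) = 1$ the map $B$ is onto $\mathbb Z^2$; hence $L/\ker_{\mathbb Z}(B) \cong \ker\big((x,y)\mapsto a x + r_b(n) y\big) = \mathbb Z\,(r_b(n), -a)$, and a short computation gives $B\mathbf w = -(r_b(n), -a)$, so $\mathbf w$ maps to a generator of this rank-one quotient. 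Thus the rows of $C'$ generate $L$, while $I_{C'} \subseteq I_2(X)$ (the rows of $C$ give binomials of $I_2(Y) \subseteq I_2(X)$, and $\mathbf w$ gives a minor of $X$). As Theorem \ref{Prop GB2} with \cite[Theorem 3.1]{BSR99} yields $I_2(X) : x_i^\infty = I_2(X)$ for every $i$, I would conclude
\[ I = I_{C'} : \Big(\prod_j x_j\Big)^\infty \subseteq I_2(X) : \Big(\prod_j x_j\Big)^\infty = I_2(X),\]
whence $I = I_2(X)$; minimal generation by the $\binom n2$ minors then follows from Theorem \ref{Prop GB2} just as the analogous conclusion in Corollary \ref{Cor GB1}.

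For the last assertion I would invoke \cite[Corollary 14]{OV2} applied to the complete family $\{\prec_i : i = 1, \ldots, n\}$ (each variable is $\prec_i$-smallest for exactly one $i$), as in Corollary \ref{Cor GB1}: since the minors form a minimal generating set that is a minimal Gr\"obner basis with respect to every $\prec_i$, the ideal $I$ has a unique minimal system of generators if and only if this set is a \emph{reduced} Gr\"obner basis with respect to each $\prec_i$. The remaining point is then the equivalence ``$\mathcal G^{(i)}$ is reduced for every $i$ $\iff a < b-1$''. To prove it I would compare each leading term with the trailing monomials of the other binomials. As $\mathcal G_Y^{(i)}$ is already reduced by Proposition \ref{Prop GB1}, and the monomials occurring in $\mathcal G^{(i)}_4$ have total degree $b+1$ or $a+1+b$, the only possible obstruction is that a leading term $x_1^b x_{k+1}$ of a binomial in $\mathcal G^{(i)}_3$ divides a trailing monomial $x_1^{a+1} x_{k+1}^b$ of a binomial in $\mathcal G^{(i)}_4$, which occurs exactly when $b \le a+1$, i.e. $a \ge b-1$. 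For $n>3$ there is an index $i$ (any $2 \le i \le n-2$) realizing such a pair, which both yields the equivalence and explains the hypothesis $n>3$.

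The conceptual core is the identity $I = I_2(X)$, whose only new ingredient beyond Corollary \ref{Cor GB1} is verifying that the single extra vector $\mathbf w$ generates $L/\ker_{\mathbb Z}(B)$; this is precisely where $\gcd(a, r_b(n)) = 1$ is used, since it guarantees both that $L$ is saturated and that $\mathbf w$ is primitive modulo $\ker_{\mathbb Z}(B)$. The rest is bookkeeping: the reducedness dichotomy collapses to the single divisibility above, and uniqueness is a black-box application of \cite[Corollary 14]{OV2}. I expect the main subtlety to lie in using that result in the two-directional form needed here, so that $a \ge b-1$ genuinely forces a second minimal generating set, arising from a non-minor element of the enlarged reduced Gr\"obner basis, as the Example following Theorem \ref{Prop GB2} illustrates.
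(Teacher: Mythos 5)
Your proposal is correct and follows the same overall skeleton as the paper's argument: the inclusion $I_2(X)\subseteq I$ via Lemma \ref{lema2}, the saturation chain $I = I_{C'}:\big(\prod_j x_j\big)^\infty \subseteq I_2(X):\big(\prod_j x_j\big)^\infty = I_2(X)$ using Theorem \ref{Prop GB2} together with \cite[Theorem 3.1]{BSR99}, and the uniqueness criterion via \cite[Corollary 14]{OV2} with precisely the divisibility $x_1^{b}x_l \mid x_1^{a+1}x_l^{b}$ as the only obstruction to reducedness. Where you genuinely diverge is the lattice-generation step: the paper introduces a new $(n-1)\times n$ matrix (also called $L$ there), computes that its maximal minors are exactly $a_1,\ldots,a_n$, and invokes \cite[Lemma 12.2]{Stu96}, whereas you augment the matrix $C$ of Corollary \ref{Cor GB1} by the single vector $\mathbf w$ and check generation of $\ker_{\mathbb Z}(a_1,\ldots,a_n)$ through the quotient by $\ker_{\mathbb Z}(B)$, using that $B\mathbf w=-(r_b(n),-a)$ is primitive exactly when $\gcd(a,r_b(n))=1$. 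Your route recycles Corollary \ref{Cor GB1} and isolates more transparently where the gcd is used --- though note that $\mathbb Z^n/\ker_{\mathbb Z}(a_1,\ldots,a_n)$ is torsion free for any gcd, so the hypothesis enters only through the primitivity of $\mathbf w$, not through the applicability of \cite[Lemma 7.6]{MS05}; the paper's route is more self-contained but buries the gcd in a determinant computation. Two small caveats: your degree count alone does not exclude a degree-$(b+1)$ leading term dividing the degree-$(a+b+1)$ monomial $x_1^{a+1}x_l^{b}$ in some other way, so a brief support argument (the leading terms of $\mathcal G^{(i)}_1,\mathcal G^{(i)}_2$, and of $\mathcal G^{(i)}_3$ with $j\ge 2$, involve no power of $x_1$ of positive exponent compatible with $x_1^{a+1}x_l^b$) is still needed --- but this matches the paper's own level of detail --- and for $b=1$ there are additional coincidences of degree-$2$ monomials, which however occur only when $a\ge b-1$ already holds and so do not affect the equivalence.
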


\begin{proof}
By Theorem \ref{Prop GB2}, to prove the first part of the statement it suffices to see that $I = I_2(X)$.

By Lemma \ref{lema2}, we have that $\varphi_\mathcal{A}(f) = 0$, for every $f \in \mathcal{G}^{(i)}$, where $\varphi_\mathcal{A}$ is the $\Bbbk-$algebra homomorphism define in \eqref{ecu2}. Therefore $I_2(X) \subseteq I$. Conversely, let $L$ be the $(n-1) \times n-$matrix 
\[\left(\begin{array}{cccccccc} b & -(b+1) & 1 & 0 & \ldots & 0 & 0 & 0 \\ 
0 & b & -(b+1) & 1 & \ldots & 0 & 0 & 0  \\ 
\vdots & \vdots & \vdots & \vdots &  & \vdots & \vdots & \vdots\\ 
0 & 0 & 0 & 0 & \ldots & b & -(b+1) & 1 \\
(a+1) & 0 & 0 & 0 & \ldots & 0 & b & -(b+1) 
\end{array}\right)\] and let $I_L$ be the ideal of $\Bbbk[x_1,  \ldots, x_n]$ generated by \[\left\{\mathbf{x}^{\mathbf{u}_+} - \mathbf{x}^{\mathbf{u}_-} \mid \mathbf{u}\ \text{is a row of}\ L\right\}.\] Clearly, $I_L \subseteq I_2(X)$.

On the one hand, a direct computation shows that the set of $(n-1) \times (n-1)-$minors of 
$L$ is equal to $\{a_1, \ldots, a_n\}$ and therefore, by \cite[Lemma 12.2]{Stu96}, \[ 
I_L : \big(\prod_{i=1}^n x_i \big)^\infty = I\] if and only if $\gcd(a_1, \ldots, a_n) = \gcd(a, r_b(n)) = 1$. On the other hand, by Theorem \ref{Prop GB2} and \cite[Theorem 3.1]
{BSR99}, we have that $I_2(X): \big(x_i^\infty\big) =  I_2(X)$ for every $i = 1, \ldots, 
n$, that is to say, $I_2(X): \big(\prod_{i=1}^n x_i\big)^\infty = I_2(X)$. Putting this 
together we conclude that \[ I_2(X) = I_2(X): \big(\prod_{i=1}^n x_i\big)^\infty \supseteq 
I_L : \big( \prod_{i=1}^n x_i\big)^\infty = I,\]
and thus $I = I_2(X)$ as claimed.

To prove the second part of the statement, we observe that, for every $i \neq n$, the non-leading term, $x_1^{a+1} x_l^b,$ of the binomial $\underline{x_{l+1} x_n^b} - x_1^{a+1} x_l^b  \in \mathcal{G}^{(i)}_4$ is divisible by the leading term of $\underline{x_1^b x_l} - x_2 x_{l-1}^b \in \mathcal{G}^{(i)}_3$, provided that $l \geq 3$ (otherwise no such binomial in $\mathcal{G}^{(i)}_3$ exists), if and only $a+1 \geq b$. Now, as these are the only divisibility relationships between the monomials of the binomials in $\mathcal{G}^{(i)}$, and $l \geq 3$ implicitly requires $n > 3$ , we obtain that for $n > 3$, $\mathcal{G}^{(i)}$ is reduced for every $\prec_i$ if and only $a < b - 1$, and, by \cite[Corollary 14]{OV2}, we conclude that for $n > 3,\ I$ has a unique minimal system of binomial generators if and only if and $a < b-1$.
\end{proof}

Notice that the condition $\gcd(a, r_b(n)) = 1$ cannot be avoided.

\begin{Example}
Let $n = 4, a = 3$ and $b = 2$. In this case, $a_1 = r_b(4) = 15, a_2 = 18, a_3 = 24$ and $a_4 = 36$. Clearly, $\gcd(a_1, a_2, a_3, a_4) = \gcd(a,r_b(4)) = 3$. By direct computation, one can check that $I$ is minimally generated by four binomials whereas $I_2(X)$ is minimally generated by six binomials. In particular, $I \neq I_2(X)$; in fact, one has that $I$ is a minimal prime of $I_2(X)$. 
\end{Example}

The following example shows the minimal system of generators of $I$ for $n=4$.

\begin{Example}
If $n=4$, then the ideal $I \subset \Bbbk[x_1,x_2,x_3,x_4]$ is minimally generated by 
\[x_2^{b+1} - x_1^b x_3, x_1^b x_4 - x_2 x_3^b, x_3^{b+1} - x_2^b x_4\]
and
\[x_1^{a+b+1} - x_2 x_4^b, x_1^{a+1} x_2^b - x_3 x_4^b, x_4^{b+1} - x_1^{a+1} x_3^b\]
(recall that the first three binomials generates $J$). 
In \cite{OjKa}, a complete classification of the monomial curves in $\mathbb{A}^4(\Bbbk)$ having a unique minimal system of generators is given. By \cite[Theorem 3.11]{OjKa}, one has that $I$ has a unique minimal system of generators if and only if $x_1^{a+1} x_3^b$ is not divisible by $x_1^b x_3$; equivalently $a < b-1$ as we already knew by Corollary \ref{Cor GB2}. Observe that the result on the uniqueness of the system of generators of $I$ can be deduced from \cite{KT10}, too.
\end{Example}

We end this paper by observing that, since both $J$ and $I$ are determinantal ideals by Corollaries \ref{Cor GB1} and \ref{Cor GB2}, respectively, one can conveniently adapt \cite[Section 2.1]{Philippe} to compute the minimal free resolution of $I$ and $J$ using the Eagon-Northcott complex. In particular, the $\Bbbk-$algebras $\Bbbk[x_1, \ldots, x_n]/J$ and $\Bbbk[x_1, \ldots, x_n]/I$ are Cohen-Macaulay of type $n-2$ and $n-1$, respectively (see \cite[Section 2.1]{Philippe} for further details).

\end{document}